\newtheorem{thm}{Theorem}
\newtheorem{lem}{Lemma}
\theoremstyle{definition}
\newtheorem{remark}{Remark}
\def \bf{\mathbf}
\title{Perturbing Masses: A Study of Centered Co-Circular Configurations in Power-Law n-Body Problems}
\begin{document}

	\maketitle
\markboth{Zhengyang Tang, Shuqiang Zhu}{  }
\vspace{-0.5cm}
\author       
\bigskip
\begin{center}
	{Zhengyang Tang$^1$, Shuqiang Zhu$^2$}\\
	{\footnotesize 
		
		School of  Mathematics,  Southwestern University of Finance and Economics, \\
		
		Chengdu 611130, China\\
		
	$^1$\texttt{kevintang\_2003@126.com},  $^2$\texttt{zhusq@swufe.edu.cn}
		
	}
	
\end{center}

\begin{abstract}
This research investigates centered co-circular central configurations in the general power-law potential $n$-body problem.  Firstly, there are no such configurations when all masses are equal, except for two; secondly,  unless all masses are equal, no such configurations exist when masses can be divided into two sets of equal masses. We adapt Wang's criterion and incorporate insights on  cyclic quadrilaterals, alongside mathematical induction. 
\end{abstract}

\textbf{Keywords:}: {   Centered co-circular central configurations; Cyclic polygon; Power-Law  $n$-body
	problem}.

\textbf{2020AMS Subject Classification}: {  70F10,   70F15}.

\section{Introductions}

In the Newtonian $n$-body problem,  there is a well-known conjecture that the regular $n$-gon with equal masses is the unique co-circular central configuration whose center of mass is the center of the circle.  We consider this conjecture  in the  general power-law potential $n$-body
problem for systems  with mixed mass distributions.  Our findings reveal that, in cases where there are both  equal masses and two unequal masses, or when the masses can be divided into two groups with equal masses within each group,  no co-circular central configuration with the center of mass at the circle's center exists. This result marks a new progression towards affirming the conjecture.


The Newtonian $n$-body problem  involves characterizing the dynamic behavior of solutions to Newton's equations,  $m_k \ddot{{q}}_k =\frac{\partial U}{\partial q_k},   k=1, 2, \cdots, n$, where $U=\sum_{i< j}\frac{m_im_j}{|q_i-q_j|}$. Though initially addressed by Newton and explored by mathematicians over the centuries, the problem remains largely unsolved for $n>2$. Central configurations, just specific particle arrangements by definition, have emerged as pivotal in understanding the $n$-body problem's dynamics. They have relevance in various aspects, including homographic solutions, the analysis of collision orbits, and the bifurcation of integral manifold (cf. \cite{Wintner1941, Smale1970-2, Saari1980}).

The main focus on the topic of central configurations is the following problem:  Is the number of relative equilibria (planar central configurations) finite, in the Newtonian $n$-body problem, for any choice of positive real numbers $m_1,\cdots,m_n$
as the masses? It was
proposed by Chazy \cite{Chazy1918} and Wintner \cite{Wintner1941}, and was listed by Smale as the sixth problem  on
his list of problems for the 21-st  century \cite{Smale1998}.   Euler and Lagrange have solved this finiteness question when  $n=3$. Hampton and Moeckel \cite{Hampton2006} gave an affirmative answer for the case of $n=4$.  Albouy and  Kaloshin \cite{Albouy2012-2} gave an important partial answer to  the question   for $n=5$.  We refer the reader to the excellent review on this problem by Hampton, Moeckel \cite{Hampton2006} and Albouy, Kaloshin \cite{Albouy2012-2}.

In this paper, the co-circular central configuration whose center of mass is the center of the circle will be called \emph{centered co-circular central configuration}, following the terminology in \cite{Hampton2016}.
It is easy to see that any regular polygon with equal masses makes  a  centered co-circular central configuration, \cite{Perko1985, Wang2019}. In order to answer the question: \emph{Do there exist planar choreography solutions 
whose masses are not all equal?} Chenciner proposed another question in \cite{Chenciner2004}: \emph{Is the regular polygon with equal masses the unique centered co-circular central configuration?}  The question was also  included
in the well-known list of open problems on the classical $n$-body problems compiled by Albouy, Cabral and Santos \cite{Albouy2012-1}.  Hampton's work in \cite{Hampton2005} provided a positive answer for the case of $n = 4$. The study of $n=5$ was addressed in \cite{LLibre2015}. Wang's recent research in \cite{Wang2023} confirmed a positive  answer for $n = 5$ and $n=6$. 

This intriguing question, like many others in celestial mechanics, has also been explored in the context of the general power-law potential $n$-body problem, where the potential takes the form:
\[ U_\alpha=\sum_{i< j}\frac{m_im_j}{|q_i-q_j|^\alpha}. \]
Notably, when  $\alpha=1$, it  corresponds to the Newtonian $n$-body problem and the limiting case $\alpha=0$ corresponds to the  $n$-vortex problem. Indeed, for the limiting case of $\alpha=0$, Cors, Hall, and Roberts in \cite{Roberts2014} have established an affirmative answer to Chenciner's question for any $n$.  For $\alpha>0$, Wang's work in \cite{Wang2023} gave an positive answer for $n = 3$ and $n=4$, and furthermore, it  introduced a valuable criterion for determining the existence of the centered co-circular central configuration.


Another interesting approach to Chenciner's question was initiated by Hampton in \cite{Hampton2016}, where he proved that there are no centered co-circular central configuration formed by $n$ equal masses plus one infinitesimal mass in the case of $\alpha=1$, or, we may say that  he proved the nonexistence of such configurations  for masses in the form  of $n+\epsilon$.  This result was subsequently expanded upon  by Corbera and Valls in \cite{Corbera2019} to  general power-law potentials and masses in the form  of $n+1$, i.e., $n$ equal masses plus one arbitrary mass.   

The goal of this paper is to study the existence of centered co-circular central configuration for masses in the form of $n+1+1$ and $n+k$. More precisely, we show: 
\begin{thm}
\label{thm:twounequal} In the general power-law potential $n$-body
problem,  no centered co-circular central configurations exist where all masses are equal except for two.
\end{thm}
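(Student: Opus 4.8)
The plan is to argue by contradiction, so suppose the bodies form a centred co-circular central configuration. Write $N$ for the number of bodies; by hypothesis $N-2$ of the masses equal a common value $m$, and we denote the two remaining ones by $\mu_1,\mu_2$. Normalise so that the bodies lie on the unit circle centred at the origin, with $q_i=(\cos\theta_i,\sin\theta_i)$, labelling the bodies so that $m_1=\dots=m_{N-2}=m$, $m_{N-1}=\mu_1$, $m_N=\mu_2$. The first step is to record, in this setting, the equations furnished by Wang's criterion. Since $|q_i|=1$ one has $r_{ij}=2\sin\tfrac{|\theta_i-\theta_j|}{2}$, and the central configuration equations split into a radial part---which for a co-circular configuration reduces to the statement that $\sum_{j\ne i}m_j r_{ij}^{-\alpha}$ is independent of $i$---and a tangential part $\sum_{j\ne i}m_j\,r_{ij}^{-\alpha-2}\sin(\theta_j-\theta_i)=0$ for each $i$; the centring condition reads $\sum_j m_j\cos\theta_j=\sum_j m_j\sin\theta_j=0$. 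Adapting the criterion to the present masses means feeding the structure $m_1=\dots=m_{N-2}=m$ into this system: the radial identities attached to the $N-2$ equal masses all share one common value, so pairwise differences of them eliminate the multiplier and leave relations among the chords $r_{ij}$ alone, and the same reduction applies to the tangential identities at the equal masses.

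The second step is to convert these chord relations into a reflective symmetry of the configuration, and this is where the cyclic-quadrilateral input enters. Repeatedly one is led to four concyclic bodies among which the differenced equations force two of the six mutual chords to be equal; equal chords subtend equal arcs, so the other four vertices form an isosceles trapezoid, and combining this with Ptolemy's relation $r_{ac}r_{bd}=r_{ab}r_{cd}+r_{ad}r_{bc}$ promotes the bare chord equality to an honest reflection of those four points in a diameter of the circle. Iterating around the circle should force the $N-2$ equal masses to be equally spaced, the two anomalous bodies to sit at positions interchanged by the associated reflection, and, via the centring equations, $\mu_1=\mu_2$. At that point one of $\mu_1,\mu_2$ can be shown to equal $m$, so the configuration is the regular $N$-gon with all masses equal, contradicting the hypothesis that two of the masses differ from the rest.

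The induction is what makes this iterative step uniform; I would induct on the number of equal masses. The base cases are small: $N=3$ and $N=4$ are Wang's theorems, and the moment one of $\mu_1,\mu_2$ turns out to equal $m$ the configuration consists of $N-1$ equal masses plus one arbitrary mass, a case settled by Corbera and Valls. In the inductive step one assumes the statement for fewer equal masses, uses the cyclic-quadrilateral lemma to peel off a reflection-symmetric pair of equal masses, or to locate a diameter of symmetry through a $\mu$-body, and passes to a smaller configuration to which the inductive hypothesis---or the $N-1$ result---applies.

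I expect the geometric rigidity step to be the main obstacle: extracting a genuine reflective symmetry of the $N$ concyclic points from the chord relations produced by the equal masses, without knowing in advance where the two anomalous masses lie. The relations are of mixed type---the power-law terms carry the arbitrary exponent $\alpha$, while the centring equations contribute ordinary trigonometric terms---so the cyclic-quadrilateral identities must be combined with care, and one has to rule out accidental chord coincidences that do not come from any symmetry of the whole configuration. Controlling the interaction of the two special masses with the bulk of equal ones, and closing the induction beyond the cases already handled by Wang and by Corbera--Valls, is where the substantive work lies.
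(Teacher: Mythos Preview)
Your proposal is not a proof but a speculative outline, and it misses the mechanism that actually drives the paper's argument. What you call ``Wang's criterion'' --- splitting the central-configuration equations into radial and tangential parts and then differencing --- is not the criterion the paper uses. The relevant tool is variational: for a centred co-circular central configuration one has $U_\alpha(g\mathbf m,\theta_{\mathbf m})\ge U_\alpha(\mathbf m,\theta_{\mathbf m})$ for every $g\in D_n$, and since $H_{\mathbf m}\mathbf m$ is a multiple of $\mathbf 1$ this forces the quadratic form $H_{\mathbf m}(g\mathbf m-\mathbf m)\ge 0$. So the whole task is to exhibit, for each mass pattern, a single $g\in D_n$ making this quantity negative. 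When the two special masses are not diametrically placed (or are but unequal), a reflection or a half-turn already gives $g\mathbf m-\mathbf m$ supported on two coordinates, and then $H_{\mathbf m}(g\mathbf m-\mathbf m)=-2/r_{ab}^\alpha<0$ is immediate. Only the residual case --- $n$ even, the two special masses equal and opposite --- needs the cyclic-quadrilateral input, and there a single shift $g=P$ produces a four-entry vector whose quadratic form is exactly the expression in Lemma~\ref{lemma:keyquadri}. No induction, no symmetry-extraction, no Ptolemy identity is required for Theorem~\ref{thm:twounequal}.

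By contrast, your plan tries to deduce a global reflective symmetry from differenced radial equations and then reduce by ``peeling off'' bodies. Both steps are problematic. Differencing $\sum_{j\ne i}m_j r_{ij}^{-\alpha}=c$ over two equal-mass indices does not produce a single chord equality but a signed sum of many $r_{ij}^{-\alpha}$ terms, from which no isosceles-trapezoid conclusion follows; and removing a symmetric pair of bodies from a central configuration does not yield a central configuration for the remaining bodies, so the inductive descent you sketch has no valid target. You yourself flag the rigidity step as ``the main obstacle'' and leave it open; that gap is real, and the paper bypasses it entirely by never attempting to prove symmetry at all.
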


\begin{thm}
\label{thm:main} In the general power-law potential n-body problem,  when masses can be grouped into two sets of equal masses, no centered co-circular central configurations  exist  unless all masses are equal.
\end{thm}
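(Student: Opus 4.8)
The plan is to argue by contradiction. Suppose there is a centered co-circular central configuration for masses $m_1=\dots=m_p=a$, $m_{p+1}=\dots=m_{p+q}=b$ with $a\ne b$ and $n=p+q$, all bodies on the unit circle with centroid at the origin; say body $k$ sits at angle $\theta_k$, so $r_{kj}=2\bigl|\sin\tfrac{\theta_k-\theta_j}{2}\bigr|$. The first step is to put the central configuration equations into their co-circular form, which is the content of Wang's criterion \cite{Wang2023}: pairing the equation for body $k$ with $\overline{q_k}$ and separating real and imaginary parts gives, for every $k$, a radial identity
\[
\sum_{j\ne k}\frac{m_j}{r_{kj}^{\alpha}}=\kappa
\]
with $\kappa$ independent of $k$ (this alone forces all masses equal when $\alpha=0$, recovering \cite{Roberts2014}), together with a tangential identity $\sum_{j\ne k} m_j\,\cot\!\bigl(\tfrac{\theta_k-\theta_j}{2}\bigr)\big/r_{kj}^{\alpha}=0$. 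The crucial use of the hypothesis is to subtract a pair of these identities for two bodies $i,i'$ carrying the \emph{same} mass: the mutual terms cancel because $m_i=m_{i'}$, leaving
\[
\sum_{j\notin\{i,i'\}} m_j\!\left(\frac{1}{r_{ij}^{\alpha}}-\frac{1}{r_{i'j}^{\alpha}}\right)=0 ,\qquad
\sum_{j\notin\{i,i'\}} m_j\!\left(\frac{\cot\tfrac{\theta_i-\theta_j}{2}}{r_{ij}^{\alpha}}-\frac{\cot\tfrac{\theta_{i'}-\theta_j}{2}}{r_{i'j}^{\alpha}}\right)=0 ,
\]
relations that see only the positions of all the bodies and the masses of the \emph{other} group, and that must hold for every pair inside the $a$-group and every pair inside the $b$-group.

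The second step is an induction on $n=p+q$, with the smaller cases already available. If $\min(p,q)=1$ the configuration is ``$n$ equal masses plus one arbitrary mass'', impossible by Corbera--Valls \cite{Corbera2019}; if $\min(p,q)=2$ it is ``all masses equal except for two'', impossible by Theorem~\ref{thm:twounequal}; in particular the smallest mixed case $(p,q)=(2,2)$ is four bodies forming a cyclic quadrilateral and is disposed of there, and this is where cyclic-quadrilateral geometry enters — a cyclic quadrilateral with equal diagonals is an isosceles trapezoid, Ptolemy's relation, and the rigidity of a cyclic polygon under prescribed arcs — converting the two displayed relations first into a reflection symmetry and then into the contradiction $a=b$. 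For the inductive step one assumes $\min(p,q)\ge 3$ and feeds the equal-mass relations, applied to all pairs inside the larger group, through the cyclic-quadrilateral constraints against pairs of the other group: four concyclic points have their six mutual distances tied by a Ptolemy-type identity, and combining this with the displayed relations should force, for each chosen outside chord, the two inside bodies to be symmetric about its perpendicular bisector; running this over enough outside pairs pins each group onto a regular sub-polygon, so that the configuration is a regular $p$-gon overlaid on a rotated regular $q$-gon on the same circle. Then the radial identity is evaluated directly on the two groups: if $p\nmid q$ or $q\nmid p$ it already fails body by body inside one group, while if $p=q$ — using that $f(\gamma):=\sum_{m\bmod p}\bigl(2|\sin(\tfrac{\pi m}{p}-\tfrac{\gamma}{2})|\bigr)^{-\alpha}$ is even in the offset $\gamma$ — the identity collapses to $(a-b)\,(\text{positive quantity})=0$, contradicting $a\ne b$; any residual common rotational symmetry of order $d\ge 2$ is itself of this nested-polygon type (on $n/d$ vertices) and is eliminated the same way, which is the point at which the induction on $n$ closes.

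The hard part is exactly this inductive step for $\min(p,q)\ge 3$: the equal-mass relations are transcendental in the angles and carry no built-in symmetry, so extracting ``each group is regularly spaced on the circle'' from them — coupled only loosely, through the cyclic-quadrilateral distance constraints, to the other group — is the delicate core of the argument, and I expect it to require an inner induction that peels off one body of a group at a time while maintaining the concyclicity bookkeeping. A secondary point that must be checked is that the tangential identities are \emph{not} automatically satisfied by the candidate nested-regular-polygon configurations, so that they genuinely contribute to excluding the mixed-mass case; this should follow from a cotangent-sum parity computation parallel to the one used for $\kappa$, and is also the kind of elementary identity one may want to verify separately at the base cases.
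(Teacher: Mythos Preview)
Your proposal has a genuine gap at exactly the point you flag as ``the hard part''. The subtracted radial and tangential identities you write down are correct, but there is no argument in your outline that converts them into the rigidity claim ``each group sits on a regular sub-polygon''. Ptolemy's theorem is a single algebraic relation among the six mutual distances of four concyclic points; it does not by itself force any reflection symmetry, and the assertion that ``combining this with the displayed relations should force, for each chosen outside chord, the two inside bodies to be symmetric about its perpendicular bisector'' is a hope, not a proof. Your induction is also not really set up: removing a body from a centered co-circular central configuration does not produce a smaller one, so ``peeling off one body of a group at a time'' has no clear meaning, and the only place you actually invoke an inductive hypothesis is the very special sub-case of a common rotational symmetry of order $d\ge 2$. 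The nested-polygon endgame is likewise incomplete (the case split ``$p\nmid q$ or $q\nmid p$'' versus ``$p=q$'' is not exhaustive, and neither branch is argued).

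The paper's proof takes a completely different and much shorter route that never attempts to locate the bodies. The key observation (Lemma~\ref{lemma:key1}) is variational: if $(\mathbf{m},\theta_{\mathbf{m}})$ is a centered co-circular central configuration then for every $g\in D_n$ one must have $H_{\mathbf{m}}(g\mathbf{m}-\mathbf{m})\ge 0$, where $H_{\mathbf{m}}$ is the matrix with off-diagonal entries $r_{ij}^{-\alpha}$ viewed as a quadratic form. Taking $g=P$ the cyclic shift, the vector $P\mathbf{m}-\mathbf{m}$ is $(m-1)$ times a $\pm 1$ vector supported on the $2k'$ indices bounding the maximal blocks of $m$'s (with $k'\le k$), and one computes
\[
H_{\mathbf{m}}(P\mathbf{m}-\mathbf{m})=2(m-1)^2\,R\bigl(G\bigr),
\qquad
R\bigl(G\{1,\dots,2N\}\bigr)=\sum_{p\equiv s\ (2)}\frac{1}{r_{ps}^{\alpha}}-\sum_{p\not\equiv s\ (2)}\frac{1}{r_{ps}^{\alpha}},
\]
where $G$ is the cyclic $2k'$-gon on those support indices. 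The entire theorem thus reduces to the purely geometric inequality $R(G)<0$ for \emph{every} cyclic $2N$-gon, and this is what is proved by induction (Lemma~\ref{lem:decomposition}) via the decomposition
\[
R\bigl(G\{1,\dots,2N\}\bigr)=R\bigl(G\{3,\dots,2N\}\bigr)+\sum_{j=2}^{N}S\bigl(G\{1,2,2j-1,2j\}\bigr)-\frac{1}{r_{12}^{\alpha}},
\]
each $S$-term being negative by the cyclic-quadrilateral Lemma~\ref{lemma:keyquadri}. So the induction is on the size of a cyclic polygon in a fixed inequality, not on the number of bodies in a family of central-configuration problems; and no information about $\theta_{\mathbf{m}}$ beyond concyclicity is ever used.
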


Our method involves refining and extending Wang's criterion \cite{Wang2023}, along with an original result concerning cyclic quadrilaterals (see Lemma \ref{lemma:keyquadri}). Notably, our approach also incorporates the use of mathematical induction.

The paper is structured as follows. In Section \ref{sec:basic}, we briefly review the notation of centered co-circular central configurations, and list several useful lemmas.  
 In Section \ref{sec:thm1},  we prove Theorem \ref{thm:twounequal} and Theorem \ref{thm:main}.

\section{Basic settings and useful lemmas}\label{sec:basic}

Suppose that  there are n positive masses represented by ${\bf{m}}=\left(m_1, m_2, \ldots, m_n\right)$ placed around a unit circle centered at the origin in the complex plane. Their positions are given by ${\bf{q}}=\left(q_1, q_2, \ldots, q_n\right)$ in $\mathbb{C}^n$, with each position defined as $q_j=e^{\sqrt{-1} \theta_j}=\cos \theta_j+\sqrt{-1} \sin \theta_j$.
Without loss of generality, assume  that $\theta_j$ falls within the range $(0,2\pi]$, and 
\[0<\theta_1<\theta_2<\cdots<\theta_n \leq 2 \pi.  \]
We also write the  positions by  $\theta =(\theta_1, \ldots, \theta_n).$
In this way, the  mass vector   determines the order of the masses on the circle. 
Now,  the potential $U_\alpha$ is
$$
U_\alpha({\bf{m}}, \theta)=\sum_{j<k} \frac{m_j m_k}{r_{j k}^\alpha},
$$
where the distance between masses $j$ and $k$ is given by $r_{j k}$:
$$
r_{j k}=\left|2 \sin \frac{\theta_j-\theta_k}{2}\right|=\sqrt{2-2 \cos \left(\theta_j-\theta_k\right)}.
$$

It is a centered  co-circular central configuration if 
\begin{equation} \notag
\sum_{j\ne k}\frac{m_{j}(q_j-q_{k})}{r_{jk}^{\alpha+2}}+\frac{\lambda}{\alpha} q_k=0,  \ \ \  k\in\{1, \ldots, n\}.  
\end{equation}
Projecting the equations on $(-\sin \theta_k, \cos \theta_k)$ and $(\cos \theta_k, \sin \theta_k)$, \cite{Roberts2014},  an equivalent form is found as  
\begin{equation}
 \frac{\partial}{\partial\theta_{k}}U_{\alpha}=0,\ \ 
 \frac{\partial}{\partial m_{k}}U_{\alpha}=\sum_{j\ne k}\frac{m_{j}}{r_{jk}}=\frac{2\lambda}{\alpha},\ k=1,\ldots ,n.
\label{eq:cce2}
\end{equation}

The central configuration equations are invariant under the rotation. To remove the symmetry, we specify that $\theta_n=2\pi$. Let 
$\mathcal{K}_{0}=\left\{ \mathbf{\theta}: 0<\theta_{1}<\theta_{2}<\ldots<\theta_{n}=2\pi\right\} ,$
$\mathcal{CC}_{0}=\left\{ (\mathbf{{\bf m}},\theta)\  {\text satisfy}\ \eqref{eq:cce2},\theta\in\mathcal{K}_{0}\right\} .$ 

\begin{lem} [\cite{Roberts2014}]
\label{lem:Cors}For any $\mathbf{{\bf m}},$ there is a unique point
in $\mathcal{K}_{0}$ satisfying $\frac{\partial}{\partial\theta_{k}}U_{\alpha}=0,k=1,\ldots,n.$
Moreover, the critical point is a minimum, denoted by $\theta_{{\bf m}}$. 
\end{lem}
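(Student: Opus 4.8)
The plan is to prove Lemma~\ref{lem:Cors} by a convexity argument. View $U_\alpha$ as a function of $(\theta_1,\dots,\theta_{n-1})$ on the set $\mathcal{K}_0$, which (after fixing $\theta_n=2\pi$) is the open, bounded, convex subset of $\mathbb{R}^{n-1}$ cut out by $0<\theta_1<\dots<\theta_{n-1}<2\pi$. We will show that $U_\alpha$ is strictly convex there and tends to $+\infty$ at the boundary; this immediately yields a unique critical point which is the global minimum.

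The first step is strict convexity. Each summand of $U_\alpha$ has the form $m_jm_k\,g(\theta_k-\theta_j)$ with $g(\phi)=(2\sin(\phi/2))^{-\alpha}$, and for $\theta\in\mathcal{K}_0$ one has $\theta_k-\theta_j\in(0,2\pi)$. Writing $\log g(\phi)=-\alpha\log 2-\alpha\log\sin(\phi/2)$ and using $\bigl(-\log\sin(\phi/2)\bigr)''=\tfrac14\csc^2(\phi/2)>0$, we see $\log g$ is convex on $(0,2\pi)$, hence $g=\exp(\log g)$ is convex there, with $g''>0$. Since $\theta\mapsto\theta_k-\theta_j$ is linear, each summand is convex, so $U_\alpha$ is convex on $\mathcal{K}_0$. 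For strictness, the second directional derivative of $U_\alpha$ along a direction $v=(v_1,\dots,v_{n-1})$ equals $\sum_{j<k}m_jm_k\,g''(\theta_k-\theta_j)(v_k-v_j)^2\ge 0$; isolating the pairs $(j,n)$ (for which $v_n=0$) already gives the subsum $\sum_{j=1}^{n-1}m_jm_n\,g''(2\pi-\theta_j)\,v_j^2$, whose vanishing forces $v_1=\dots=v_{n-1}=0$ because the masses are positive and $g''>0$. Thus the Hessian is positive definite and $U_\alpha$ is strictly convex on $\mathcal{K}_0$.

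The second step is properness. The closure $\overline{\mathcal{K}_0}$ is compact, and every point of $\partial\mathcal{K}_0$ forces a collision: either $\theta_{j+1}=\theta_j$ for some $j<n$, or $\theta_1=0$ so that $q_1=q_n$, or $\theta_{n-1}=2\pi$ so that $q_{n-1}=q_n$. In each case the corresponding distance $r_{jk}\to0$ while all remaining summands stay nonnegative, so $U_\alpha\to+\infty$ as $\theta\to\partial\mathcal{K}_0$. Hence the continuous function $U_\alpha$ attains a global minimum at an interior point of $\mathcal{K}_0$, which is necessarily a solution of $\partial U_\alpha/\partial\theta_k=0$, $k=1,\dots,n$. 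Finally, a strictly convex function on a convex open set has at most one critical point (if $x_0\neq x_1$ were both critical, then $t\mapsto U_\alpha(x_0+t(x_1-x_0))$ would be strictly convex on $[0,1]$ with vanishing derivative at both endpoints, a contradiction). Combining the two steps, the critical point is unique and equals the global minimum; denote it $\theta_{\bf m}$.

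I expect the only non-routine point to be the convexity of the one-variable function $g(\phi)=(2\sin(\phi/2))^{-\alpha}$ on $(0,2\pi)$ — handled cleanly via log-convexity — together with the observation that the pair terms involving the fixed angle $\theta_n$ already pin down the kernel of the Hessian; the rest is soft. For the limiting case $\alpha=0$, where the relevant potential is $-\sum_{j<k}m_jm_k\log r_{jk}$, the identical argument applies with $g(\phi)=-\log\bigl(2\sin(\phi/2)\bigr)$, whose second derivative is again $\tfrac14\csc^2(\phi/2)>0$; this is the case established in \cite{Roberts2014}.
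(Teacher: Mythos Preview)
The paper does not supply its own proof of this lemma; it is quoted from \cite{Roberts2014} without argument, so there is nothing in the text to compare against. Your convexity proof is correct and is precisely the approach of Cors--Hall--Roberts: strict convexity of $U_\alpha$ on the open simplex $\mathcal{K}_0$ (via convexity of $\phi\mapsto (2\sin(\phi/2))^{-\alpha}$ on $(0,2\pi)$) together with blow-up at the collision boundary forces a unique interior critical point, the global minimum.

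One small remark. At an interior minimum of the restricted function of $(\theta_1,\dots,\theta_{n-1})$ you obtain $\partial U_\alpha/\partial\theta_k=0$ only for $k=1,\dots,n-1$; you then assert it for $k=n$ as well without comment. This follows immediately from the rotational invariance of $U_\alpha$, which gives the identity $\sum_{k=1}^n \partial U_\alpha/\partial\theta_k\equiv 0$, so the $n$-th equation is automatic once the first $n-1$ hold. It is worth saying explicitly.
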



The dihedral group, $D_{n}$, acts on the set $\mathbb{R}_{+}^{n}\times\mathcal{K}_{0}$
as followes. Denote

\[
P=\left(\begin{array}{cccccc}
0 & 1 & 0 & \ldots & 0 & 0\\
0 & 0 & 1 & \ldots & 0 & 0\\
. & . & . & \ldots & . & .\\
0 & 0 & 0 & \ldots & 0 & 1\\
1 & 0 & 0 & \ldots & 0 & 0
\end{array}\right),\ \ S=\left(\begin{array}{cccccc}
0 & 0 & \ldots & 0 & 1 & 0\\
0 & 0 & \ldots & 1 & 0 & 0\\
. & . & \ldots & . & . & .\\
1 & 0 & \ldots & 0 & 0 & 0\\
0 & 0 & \ldots & 0 & 0 & 1
\end{array}\right),
\]
\[
\mathcal{P}=\left(\begin{array}{cccccc}
-1 & 1 & 0 & \ldots & 0 & 0\\
-1 & 0 & 1 & \ldots & 0 & 0\\
. & . & . & \ldots & . & .\\
-1 & 0 & 0 & \ldots & 0 & 1\\
0 & 0 & 0 & \ldots & 0 & 1
\end{array}\right),\ \ \mathcal{S}=\left(\begin{array}{cccccc}
0 & 0 & \ldots & 0 & -1 & 1\\
0 & 0 & \ldots & -1 & 0 & 1\\
. & . & \ldots & . & . & .\\
-1 & 0 & \ldots & 0 & 0 & 1\\
0 & 0 & \ldots & 0 & 0 & 1
\end{array}\right).
\]

The action of $D_{n}$ on $\mathbb{R}_{+}^{n}$ is by the matrix group
generated by $P,S$, and the action of $D_{n}$on $\mathcal{K}_{0}$
is by the matrix group generated by $\mathcal{P},\mathcal{S}$. For
any $g=P^{h}S^{l}\in D_{n},$ letting $\hat{g}=\mathcal{P}^{h}\mathcal{S}^{l},$define
the action of $D_{n}$ on $\mathbb{R}_{+}^{n}\times\mathcal{K}_{0}$
by 
\[
g\cdot(\mathbf{m,}\theta)=(g\mathbf{{\bf m}},\hat{g}\theta).
\]

\begin{lem}
Assume that $(\mathbf{m,}\theta_{\mathbf{{\bf m}}})\in\mathcal{CC}_{0}$
is a centered co-circular central configuration, then
\begin{enumerate}
\item For any $g\in D_{n}$, $g\cdot(\mathbf{m,}\theta_{\mathbf{{\bf m}}})\in\mathcal{CC}_{0}$. 
\item $U_{\alpha}(\mathbf{m,}\theta_{\mathbf{{\bf m}}})=U_{\alpha}(g\mathbf{m,}\hat{g}\theta_{\mathbf{{\bf m}}})\le U_{\alpha}(g\mathbf{m,}\theta_{\mathbf{{\bf m}}})$
and $\hat{g}\theta_{\mathbf{{\bf m}}}=\theta_{g\mathbf{{\bf m}}}.$ 
\item $\mathbf{m=}g\mathbf{{\bf m}}$ implies $\hat{g}\theta_{\mathbf{{\bf m}}}=\theta_{\mathbf{{\bf m}}}$. 
\end{enumerate}
\end{lem}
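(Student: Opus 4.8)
The plan is to reduce all three assertions to a single invariance identity for $U_\alpha$ combined with the uniqueness statement of Lemma~\ref{lem:Cors}. The starting point is to read off the geometric content of the matrices $\mathcal{P},\mathcal{S}$: a direct computation gives, for $\theta\in\mathcal{K}_0$,
\[
\mathcal{P}\theta=(\theta_2-\theta_1,\dots,\theta_n-\theta_1,\theta_n),\qquad
\mathcal{S}\theta=(\theta_n-\theta_{n-1},\dots,\theta_n-\theta_1,\theta_n),
\]
and since $\theta_n=2\pi$ both vectors are strictly increasing with last entry $2\pi$, so $\hat g$ maps $\mathcal{K}_0$ into itself for $g$ a generator, hence for every $g\in D_n$ once one checks that $\mathcal{P},\mathcal{S}$ satisfy the defining relations $\mathcal{P}^n=\mathcal{S}^2=\mathrm{id}$, $\mathcal{S}\mathcal{P}\mathcal{S}=\mathcal{P}^{-1}$ on $\mathcal{K}_0$, so that $P^hS^l\mapsto\mathcal{P}^h\mathcal{S}^l$ is a well-defined $D_n$-action. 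Geometrically, $\hat g$ is the relabelling of $q_1,\dots,q_n$ by a cyclic shift and/or the reflection $q_k\mapsto\bar q_k$, followed by the rotation restoring $\theta_n=2\pi$; let $\pi_g$ be the induced index permutation. Since the chord lengths $r_{jk}=|2\sin\frac{\theta_j-\theta_k}{2}|$ are invariant under an overall rotation and under conjugation, applying $\hat g$ to $\theta$ and $g$ to $\mathbf{m}$ preserves the multiset $\{(m_jm_k,r_{jk})\}_{j<k}$, whence
\begin{equation}\label{eq:Uinv}
U_\alpha(g\mathbf{m},\hat g\theta)=U_\alpha(\mathbf{m},\theta)\qquad\text{for all }g\in D_n,\ \theta\in\mathcal{K}_0,
\end{equation}
which it suffices to verify for $g=P$ and $g=S$ and then propagate by composition.

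To deduce (1), fix $g$ and set $\psi=\hat g$, an invertible affine self-map of $\mathcal{K}_0$. By \eqref{eq:Uinv}, $U_\alpha(g\mathbf{m},\cdot)=U_\alpha(\mathbf{m},\psi^{-1}(\cdot))$ on $\mathcal{K}_0$, so $\frac{\partial}{\partial\theta_k}U_\alpha(\mathbf{m},\theta_{\mathbf{m}})=0$ for all $k$ forces $\frac{\partial}{\partial\theta_k}U_\alpha(g\mathbf{m},\psi\theta_{\mathbf{m}})=0$ for all $k$ by the chain rule. For the remaining equations in \eqref{eq:cce2}, relabelling by $\pi_g$ turns $\sum_{j\ne k}(g\mathbf{m})_j/\tilde r_{jk}$, with $\tilde r_{jk}$ the distances at $\psi\theta_{\mathbf{m}}$, into $\sum_{j'\ne\pi_g^{-1}(k)}m_{j'}/r_{j'\,\pi_g^{-1}(k)}=2\lambda/\alpha$, using that $(\mathbf{m},\theta_{\mathbf{m}})$ satisfies \eqref{eq:cce2} for every index. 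Hence $g\cdot(\mathbf{m},\theta_{\mathbf{m}})=(g\mathbf{m},\psi\theta_{\mathbf{m}})\in\mathcal{CC}_0$.

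For (2) and (3), Lemma~\ref{lem:Cors} gives that $U_\alpha(g\mathbf{m},\cdot)$ has a unique critical point on $\mathcal{K}_0$, namely its minimizer $\theta_{g\mathbf{m}}$; since $\psi\theta_{\mathbf{m}}$ was just shown to be such a point, $\hat g\theta_{\mathbf{m}}=\theta_{g\mathbf{m}}$. Evaluating \eqref{eq:Uinv} at $\theta=\theta_{\mathbf{m}}$ yields $U_\alpha(\mathbf{m},\theta_{\mathbf{m}})=U_\alpha(g\mathbf{m},\hat g\theta_{\mathbf{m}})=U_\alpha(g\mathbf{m},\theta_{g\mathbf{m}})\le U_\alpha(g\mathbf{m},\theta_{\mathbf{m}})$, the inequality because $\theta_{g\mathbf{m}}$ minimizes $U_\alpha(g\mathbf{m},\cdot)$ over $\mathcal{K}_0$ while $\theta_{\mathbf{m}}\in\mathcal{K}_0$; this is (2). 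If moreover $g\mathbf{m}=\mathbf{m}$, then $\theta_{g\mathbf{m}}=\theta_{\mathbf{m}}$ by definition of the minimizer, so $\hat g\theta_{\mathbf{m}}=\theta_{g\mathbf{m}}=\theta_{\mathbf{m}}$, which is (3). The only delicate point is the bookkeeping of the first paragraph --- identifying precisely the permutation $\pi_g$ that the matrix $\hat g$ realizes with the $\theta_n=2\pi$ normalization built in, and confirming the $D_n$-relations on $\mathcal{K}_0$ so that the phrase ``for any $g\in D_n$'' is well posed; once \eqref{eq:Uinv} is in hand, (1)--(3) follow formally.
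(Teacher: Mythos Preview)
Your proof is correct and follows essentially the same approach as the paper's: both arguments rest on the invariance identity $U_\alpha(g\mathbf{m},\hat g\theta)=U_\alpha(\mathbf{m},\theta)$ (the paper phrases this as invariance of \eqref{eq:cce2} and $U_\alpha$ under $O(2)\supset D_n$) together with the uniqueness of the minimizer in Lemma~\ref{lem:Cors}. You are simply more explicit about the bookkeeping --- computing $\mathcal{P}\theta,\mathcal{S}\theta$, checking that $\hat g$ preserves $\mathcal{K}_0$, and tracking the index permutation $\pi_g$ --- which the paper compresses into the single sentence about $O(2)$-invariance.
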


\begin{proof}
Since equations \eqref{eq:cce2} and $U_\alpha$ are invariant under the group $O(2)$ and $D_n$ is a discrete subgroup of $O(2)$, we see part (1) holds, and $U_{\alpha}(\mathbf{m,}\theta_{\mathbf{{\bf m}}})=U_{\alpha}(g\mathbf{m,}\hat{g}\theta_{\mathbf{{\bf m}}})$.  The uniqueness of the minimum implies
\[\hat{g}\theta_{\mathbf{{\bf m}}}=\theta_{g\mathbf{{\bf m}}}, \  U_{\alpha}(g\mathbf{m,}\hat{g}\theta_{\mathbf{{\bf m}}})\le U_{\alpha}(g\mathbf{m,}\theta_{\mathbf{{\bf m}}}). \]  
So part (2) is proved. 

If $\mathbf{m}=g\mathbf{m}$, then the  uniqueness of the minimum implies the equation of part (3). 
\end{proof}

Let us elaborate the second part of the above lemma. Assume that $(\mathbf{m,}\theta_{\mathbf{{\bf m}}})\in\mathcal{CC}_{0}.$
Consider the symmetric matrix $H_{\mathbf{{\bf m}}}$, which is determined
by $\mathbf{m}$ and the corresponding $\theta_{{\bf m}}$, by $\text{(\ensuremath{H_{{\bf m}})_{ij}}=1/\ensuremath{r_{ij}^{\alpha}}}$ when
$i\ne j$, and $(\ensuremath{H_{{\bf m}})_{ii}}=0.$ When considered
as a quadratic form, we can write 
\begin{equation}
U_{\alpha}(\mathbf{m},\theta_{{\bf m}})=H_{{\bf m}}(\mathbf{m})=\mathbf{m}^{T}H_{{\bf m}}\mathbf{m}.\label{equ:UandHm1}
\end{equation}
The gradient of $U_{\alpha}$ with respect to $\mathbf{m}$ is $H_{\mathbf{{\bf m}}}{\bf m}.$
Note that $(\mathbf{m},\theta_{{\bf m}})$ also satisfies $\frac{\partial}{\partial m_{k}}U_{\alpha}=\frac{2\lambda}{\alpha},k=1,\ldots,n,$
so $H_{\mathbf{{\bf m}}}\mathbf{m}=\frac{2\lambda}{\alpha}1$. Since $g\mathbf{m}-\mathbf{m}\in\mathbf{1}^{\bot}$,
where $\mathbf{1}^{\bot}=\{(x_{1},\ldots,x_{n})\in\mathbb{R}^{n},\sum x_{i}=0\},$
then 
\begin{equation}
U_{\alpha}(g\mathbf{m},\theta_{{\bf m}})=U_{\alpha}(\mathbf{m},\theta_{{\bf m}})+0+H_{{\bf m}}(g\mathbf{m}-\mathbf{m}).\label{equ:UandHm2}
\end{equation}

\begin{lem}
\label{lemma:key1} Given $\mathbf{m}$ and the corresponding $\theta_{{\bf m}}$,
if there is some $g\in D_{n}$ such that $H_{\mathbf{{\bf m}}}(g\text{\textbf{m}}-\mathbf{m})<0$,
then $(\mathbf{m},\theta_{{\bf m}})\notin\mathcal{CC}_{0}$. 
\end{lem}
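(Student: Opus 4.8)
The plan is a short argument by contradiction that simply chains the identities \eqref{equ:UandHm1}--\eqref{equ:UandHm2} with part (2) of the preceding lemma. Suppose, contrary to the conclusion, that $(\mathbf{m},\theta_{\mathbf{m}})\in\mathcal{CC}_{0}$. Then the mass equations in \eqref{eq:cce2} hold at $(\mathbf{m},\theta_{\mathbf{m}})$, which is exactly the relation $H_{\mathbf{m}}\mathbf{m}=\frac{2\lambda}{\alpha}\mathbf{1}$ recorded just above the lemma. Consequently the quadratic expansion \eqref{equ:UandHm2} is valid: writing $g\mathbf{m}=\mathbf{m}+(g\mathbf{m}-\mathbf{m})$ and noting that the cross term $2\mathbf{m}^{T}H_{\mathbf{m}}(g\mathbf{m}-\mathbf{m})=\frac{4\lambda}{\alpha}\mathbf{1}^{T}(g\mathbf{m}-\mathbf{m})$ vanishes because $g\mathbf{m}-\mathbf{m}\in\mathbf{1}^{\bot}$, one obtains
\[
U_{\alpha}(g\mathbf{m},\theta_{\mathbf{m}})=U_{\alpha}(\mathbf{m},\theta_{\mathbf{m}})+H_{\mathbf{m}}(g\mathbf{m}-\mathbf{m}).
\]

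Next I would feed in the hypothesis $H_{\mathbf{m}}(g\mathbf{m}-\mathbf{m})<0$, which immediately yields the strict inequality $U_{\alpha}(g\mathbf{m},\theta_{\mathbf{m}})<U_{\alpha}(\mathbf{m},\theta_{\mathbf{m}})$. On the other hand, since we are assuming $(\mathbf{m},\theta_{\mathbf{m}})\in\mathcal{CC}_{0}$, part (2) of the preceding lemma applies and gives $U_{\alpha}(\mathbf{m},\theta_{\mathbf{m}})=U_{\alpha}(g\mathbf{m},\hat{g}\theta_{\mathbf{m}})\le U_{\alpha}(g\mathbf{m},\theta_{\mathbf{m}})$; here the equality is the $D_{n}$-invariance of $U_{\alpha}$, and the inequality records that $\hat{g}\theta_{\mathbf{m}}=\theta_{g\mathbf{m}}$ is the minimizer of $U_{\alpha}(g\mathbf{m},\cdot)$ supplied by Lemma \ref{lem:Cors}. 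These two statements are incompatible, so the assumption $(\mathbf{m},\theta_{\mathbf{m}})\in\mathcal{CC}_{0}$ cannot hold, which is precisely the assertion of the lemma.

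I do not expect a genuine obstacle: the lemma is a direct corollary of formulas already derived in the text, and the whole proof is essentially a one-line comparison of potential values. The only point that deserves to be spelled out is that each ingredient used after the contradiction hypothesis — the relation $H_{\mathbf{m}}\mathbf{m}=\frac{2\lambda}{\alpha}\mathbf{1}$, the expansion \eqref{equ:UandHm2}, and the chain $U_{\alpha}(\mathbf{m},\theta_{\mathbf{m}})=U_{\alpha}(g\mathbf{m},\hat{g}\theta_{\mathbf{m}})\le U_{\alpha}(g\mathbf{m},\theta_{\mathbf{m}})$ — is licensed exactly by that hypothesis, so the logic of the reductio is clean.
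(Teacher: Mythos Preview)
Your argument is correct and is exactly the one the paper intends: the text immediately preceding the lemma derives \eqref{equ:UandHm2} under the assumption $(\mathbf{m},\theta_{\mathbf{m}})\in\mathcal{CC}_{0}$, and combining this with part (2) of the previous lemma yields the contradiction you spell out. The paper simply leaves the final comparison implicit, so your write-up is a faithful expansion of the same approach.
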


\begin{remark}
	The above  two observations are essentially due to Wang \cite{Wang2023}.  While our object is the potential  $U_\alpha$, his attention is on the function in the form of $U_\alpha + \frac{U_{-2}}{K}$, where $K \ge \frac{2^{3+\alpha}}{\alpha}$. 
	\end{remark}

A co-circular configuration can be viewed as a \emph{cyclic polygon},
which is by definition a polygon with vertices upon which a circle
can be circumscribed. The following fact of cyclic quadrilateral is
important  for the study of $H_{\mathbf{{\bf m}}}(g\text{\textbf{m}}-\mathbf{m})$. 
\begin{lem}
\label{lemma:keyquadri} Cosider a cyclic quadrilateral with vertices
A, B, C, D, which are odered counterclockwise. See Figure  \ref{fig:2quadri}.  Then for
any $\alpha>0,$ it holds that

\[
\frac{1}{AC^{\alpha}}+\frac{1}{BD^{\alpha}}-(\frac{1}{AD^{\alpha}}+\frac{1}{BC^{\alpha}})<0.
\]
\end{lem}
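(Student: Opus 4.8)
The plan is to reduce the inequality to a statement purely about arcs, using the fact that on a circle the chord length between two points depends only on the arc between them. Place the four points on a unit circle and let the arcs subtended by the sides be $AB=a$, $BC=b$, $CD=c$, $DA=d$ (counterclockwise), so that $a+b+c+d=2\pi$ and each of $a,b,c,d\in(0,2\pi)$. A chord subtending an arc $\phi$ has length $2\sin(\phi/2)$, so writing $f(\phi)=\bigl(2\sin(\phi/2)\bigr)^{-\alpha}$ for $\phi\in(0,2\pi)$ and noting $AC$ subtends $a+b$, $BD$ subtends $b+c$, $AD$ subtends $d$ (equivalently $a+b+c$), and $BC$ subtends $b$, the claim becomes
\[
f(a+b)+f(b+c)-f(d)-f(b)<0,
\qquad d=2\pi-a-b-c.
\]
Using $f(d)=f(2\pi-(a+b+c))=f(a+b+c)$ since $\sin$ is symmetric about $\pi$, I want to show $f(a+b)+f(b+c)<f(b)+f(a+b+c)$. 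Setting $x=a+b$, $y=b+c$, $s=b$, $t=a+b+c$ we have $s<x\le t$, $s<y\le t$, and $x+y=s+t$; so the inequality to prove is exactly the strict Schur-type (midpoint/majorization) inequality $f(x)+f(y)<f(s)+f(t)$ whenever $s<\min(x,y)$, $\max(x,y)\le t$, $x+y=s+t$, with all arguments in $(0,2\pi)$.

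The natural route is strict convexity: if $f$ were convex on $(0,2\pi)$ then $(x,y)$ majorized by... wait — here $(x,y)$ is the "more balanced" pair and $(s,t)$ the "more spread" pair, so $(s,t)$ majorizes $(x,y)$ and convexity of $f$ gives $f(x)+f(y)\le f(s)+f(t)$, strict unless the pairs coincide. So it suffices to show $f(\phi)=(2\sin(\phi/2))^{-\alpha}$ is strictly convex on $(0,2\pi)$ for every $\alpha>0$. I would verify this by computing $\bigl(\log f\bigr)'' $ and $f''$ directly: with $u=\phi/2$, $\log f = -\alpha\log 2 - \alpha\log\sin u$, and $(\log f)'' = \tfrac14\,\alpha\,\csc^2 u>0$, so $\log f$ is strictly convex; combined with $f>0$ this does not immediately give $f$ convex, but $f'' = f\cdot\bigl((\log f)'' + ((\log f)')^2\bigr)>0$ since both bracketed terms are $\ge 0$ and the first is strictly positive. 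Hence $f$ is strictly convex on $(0,2\pi)$, and the majorization inequality yields the strict conclusion because the two pairs are genuinely distinct (one cannot have $\{x,y\}=\{s,t\}$ here, as $s<x$ forces $s\ne x$ and $s<y$ forces $s\ne y$, so $s\notin\{x,y\}$).

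The main obstacle I anticipate is purely bookkeeping rather than conceptual: correctly matching each of the six chords $AC,BD,AD,BC$ to the right arc and confirming that the two diagonals subtend the "inner" arcs while $AD$ and $BC$ give the extremes $t$ and $s$ — a sign error here would flip the inequality. One should also double-check the degenerate boundary behavior (e.g. a side-arc tending to $0$ or the configuration degenerating so $x\to t$) to make sure the inequality is strict throughout the open region where $A,B,C,D$ are genuinely distinct; strict convexity handles this as long as $s$ is strictly separated from $x$ and $y$, which holds precisely because $b>0$ and $a,c>0$. Finally, I would record that $AD$ can equal the longer chord only when $a+b+c\le\pi$, but since $f(d)=f(a+b+c)$ regardless of which of $d,a+b+c$ is $\le\pi$, the argument is unaffected by which arc one "names" as the one $AD$ subtends.
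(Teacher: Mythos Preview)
Your argument is correct. Parametrising by arcs and reducing to the Schur/Karamata inequality for the strictly convex function $f(\phi)=(2\sin(\phi/2))^{-\alpha}$ on $(0,2\pi)$ is a clean route; the verification $f''=f\bigl((\log f)''+((\log f)')^2\bigr)>0$ is fine, and the check that $s<\min(x,y)\le\max(x,y)<t$ with $x+y=s+t$ gives genuine strict majorization is exactly what is needed. The paper instead uses inscribed angles and the law of sines to write all four lengths as $2R\sin(\cdot)$, then expresses the difference as
\[
-\frac{1}{\alpha(2R)^{\alpha}}\int_{0}^{\gamma}\Bigl(\frac{\cos(x+\beta)}{\sin^{\alpha+1}(x+\beta)}-\frac{\cos(x+\delta)}{\sin^{\alpha+1}(x+\delta)}\Bigr)\,dx
\]
and appeals to the monotonicity of $\cos x/\sin^{\alpha+1}x$ on $(0,\pi)$. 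At bottom the two proofs rest on the same fact, since that monotonicity is precisely the statement that $\bigl(\sin^{-\alpha}\bigr)'$ is increasing, i.e.\ that $\sin^{-\alpha}$ is convex, and the paper's integral identity is the ``second difference'' form of convexity. Your packaging is more conceptual and makes the structure (a two-term majorization) transparent; the paper's version is more hands-on but has the minor advantage of never invoking Karamata by name. One small cosmetic point: your inequalities $x\le t$, $y\le t$ are in fact strict (since $a,c>0$), which you implicitly use anyway when arguing strictness.
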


\begin{figure}[h!]
	\centering
	\includegraphics[scale=0.7]{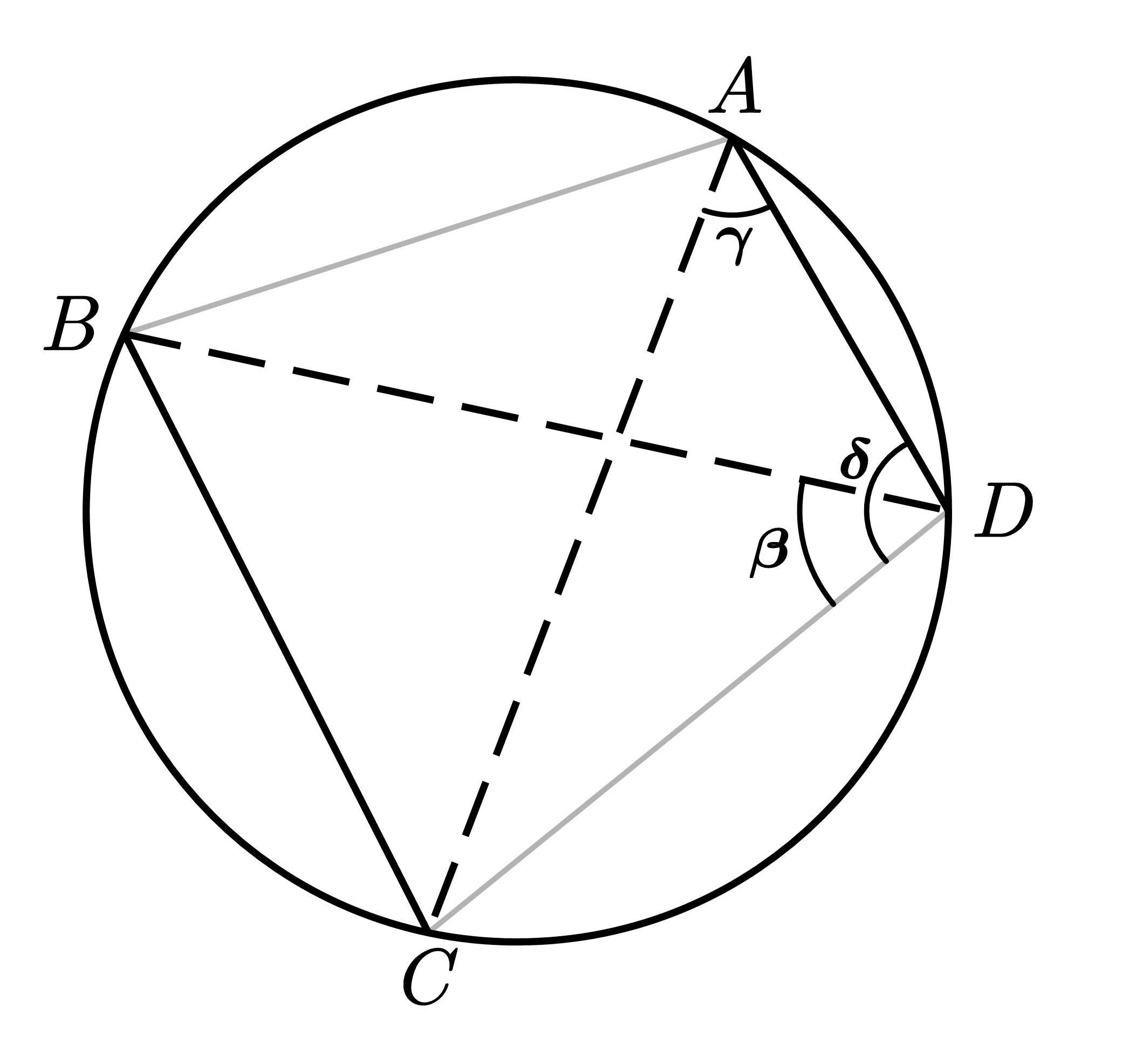}  
	\caption{One cyclic quadrilateral. The dashed lines correspond to the positive terms, while the solid black lines  correspond to the negative terms.
	}
	\label{fig:2quadri}
\end{figure}

\begin{proof}
Set $\angle CDB=\beta,\angle CDA=\delta,\angle CAD=\gamma.$ Then
$0<\beta<\delta,\beta+\gamma<\delta+\gamma<\pi.$ By the law of sines,
$\frac{AC}{\sin\delta}=\frac{BD}{\sin(\beta+\gamma)}=\frac{AD}{\sin(\pi-\delta-\gamma)}=\frac{BC}{\sin\beta}=2R,$
where $R$ is the radius of the circumcircle. Hence 
\[
\begin{array}{cc}
\frac{1}{AC^{\alpha}}+\frac{1}{BD^{\alpha}}-(\frac{1}{AD^{\alpha}}+\frac{1}{BC^{\alpha}}) & =\frac{1}{(2R)^{\alpha}}\left[\frac{1}{\sin^{\alpha}\delta}+\frac{1}{\sin^{\alpha}(\beta+\gamma)}-\frac{1}{\sin^{\alpha}(\delta+\gamma)}-\frac{1}{\sin^{\alpha}\beta}\right]\\
 & =-\frac{1}{(2R)^{\alpha}}\left[\frac{1}{sin^{\alpha}(\delta+\gamma)}-\frac{1}{sin^{\alpha}\delta}-(\frac{1}{sin^{\alpha}(\beta+\gamma)}-\frac{1}{sin^{\alpha}\beta})\right]\\
 & =-\frac{1}{\alpha(2R)^{\alpha}}\left[\int_{\delta}^{\delta+\gamma}-\frac{cos(x)}{sin^{\alpha+1}(x)}dx-\int_{\beta}^{\beta+\gamma}-\frac{cos(x)}{sin^{\alpha+1}(x)}dx\right]\\
 & =-\frac{1}{\alpha(2R)^{\alpha}}\left[\int_{0}^{\gamma}\frac{cos(x+\beta)}{sin^{\alpha+1}(x+\beta)}-\frac{cos(x+\delta)}{sin^{\alpha+1}(x+\delta)}dx\right]<0.
\end{array}
\]
In the last step, we employ the fact that $f(x)=\frac{cosx}{sin^{\alpha+1}x}$
is a decreasing function on $(0,\pi)$. Indeed, $f'(x)=-\frac{1+\alpha cos^{2}(x)}{sin^{\alpha+2}(x)}.$ 
\end{proof}

\section{Proof of the main results }\label{sec:thm1}

The main idea is to  utilize the criterion of Lemma \ref{lemma:key1}. 
If we can find some $g\in D_n$ such that the sign of $H_\mathbf{m}(g\mathbf{m}-\mathbf{m})$ is negative, then we can conclude the nonexistence of  centered co-circular central configurations. Theorem \ref{thm:twounequal} is proved in Subsection \ref{subsec:thm1}.  In Theorem \ref{thm:main}, we consider the case that  the masses consist of two group of equal masses.  We make induction on the   cardinality of the second group. 
To make the explanation more readable,  we will first show that if  the second group has cardinality 3,  there is no centered co-circular central configurations unless all masses are equal,  in Subsection \ref{subsec:thm2-1}. In the last subsection, we prove the general case of Theorem \ref{thm:main}.

\subsection{Proof of Theorem \ref{thm:twounequal}}  \label{subsec:thm1}

 In \cite{Wang2023},   Corollary 3.8, Wang has proved the result  in  the case that  the total number of particles is odd and all the masses are equal except two. 
  We only need to   discuss the case of the total number of particles being even. For completeness, we include the discussion of the case that the total number of particles is odd.  
   
\begin{proof}
\textbf{I. when $n$ is odd. }Without loss of generality, suppose
the mass vector is 
\[
\mathbf{m}=(1,\ldots,1,m_{k},1,\ldots,1,m_{n}),m_{k}\ne1,m_{n}\ne1.
\]
  Note that $S\mathbf{m}-\mathbf{m}=\pm (m_{k}-1)(0,\ldots,0,1,0,\ldots,0,-1,0,\ldots,0,0).$
Obviously, $H_{\mathbf{{\bf m}}}(S\text{\textbf{m}}-\mathbf{m})<0.$

\textbf{II. when $n$ is even. }Without loss of generality, suppose
the mass vector is  
\[
\mathbf{m}=(1,\ldots,1,m_{j},1,\ldots,1,m_{n}),m_{j}\ne1,m_{n}\ne1.
\]
There are three subcases: $1,j\ne\frac{n}{2}$; 2, $j=\frac{n}{2}$
and $m_{j}\ne m_{n}$; 3, $j=\frac{n}{2}$ and $m_{j}=m_{n}$.

\textbf{II-1.} $j\ne\frac{n}{2}$. Simliar to the case when $n$ is
odd, we have $H_{\mathbf{{\bf m}}}(S\text{\textbf{m}}-\mathbf{m})<0.$

\textbf{II-2.} $j=\frac{n}{2}$ and $m_{j}\ne m_{n}.$ Let $g=P^{\frac{n}{2}}.$
Then $P^{\frac{n}{2}}\mathbf{m}-\mathbf{m}=(m_{n}-m_{j})(0,\ldots,0,1,0,\ldots,0,-1).$
Obviously, $H_{\mathbf{{\bf m}}}(g\text{\textbf{m}}-\mathbf{m})<0$.  

\textbf{II-3.} $j=\frac{n}{2}$ and $m_{j}=m_{n}.$Let $g=P.$ Then 
\[
g\mathbf{m}-\mathbf{m}=(m_{n}-1)(0,0,\ldots,1,-1,0,0,\ldots,1,-1),\ m_{n}-1\ne0.
\]
We will show that the inequality $H_{\mathbf{{\bf m}}}(g\text{\textbf{m}}-\mathbf{m})<0$
holds again. Note that
\[
H_{{\bf m}}(g\mathbf{m}-\mathbf{m})=2(m_{n}-1)^{2}(\frac{1}{r_{\frac{n}{2}-1,n-1}^{\alpha}}+\frac{1}{r_{\frac{n}{2},n}^{\alpha}}-\frac{1}{r_{\frac{n}{2},n-1}^{\alpha}}-\frac{1}{r_{n-1,n}^{\alpha}}-\frac{1}{r_{\frac{n}{2}-1,n}^{\alpha}}-\frac{1}{r_{\frac{n}{2}-1,\frac{n}{2}}^{\alpha}}).
\]

\begin{figure}[h!]
	\centering
	\includegraphics[scale=0.7]{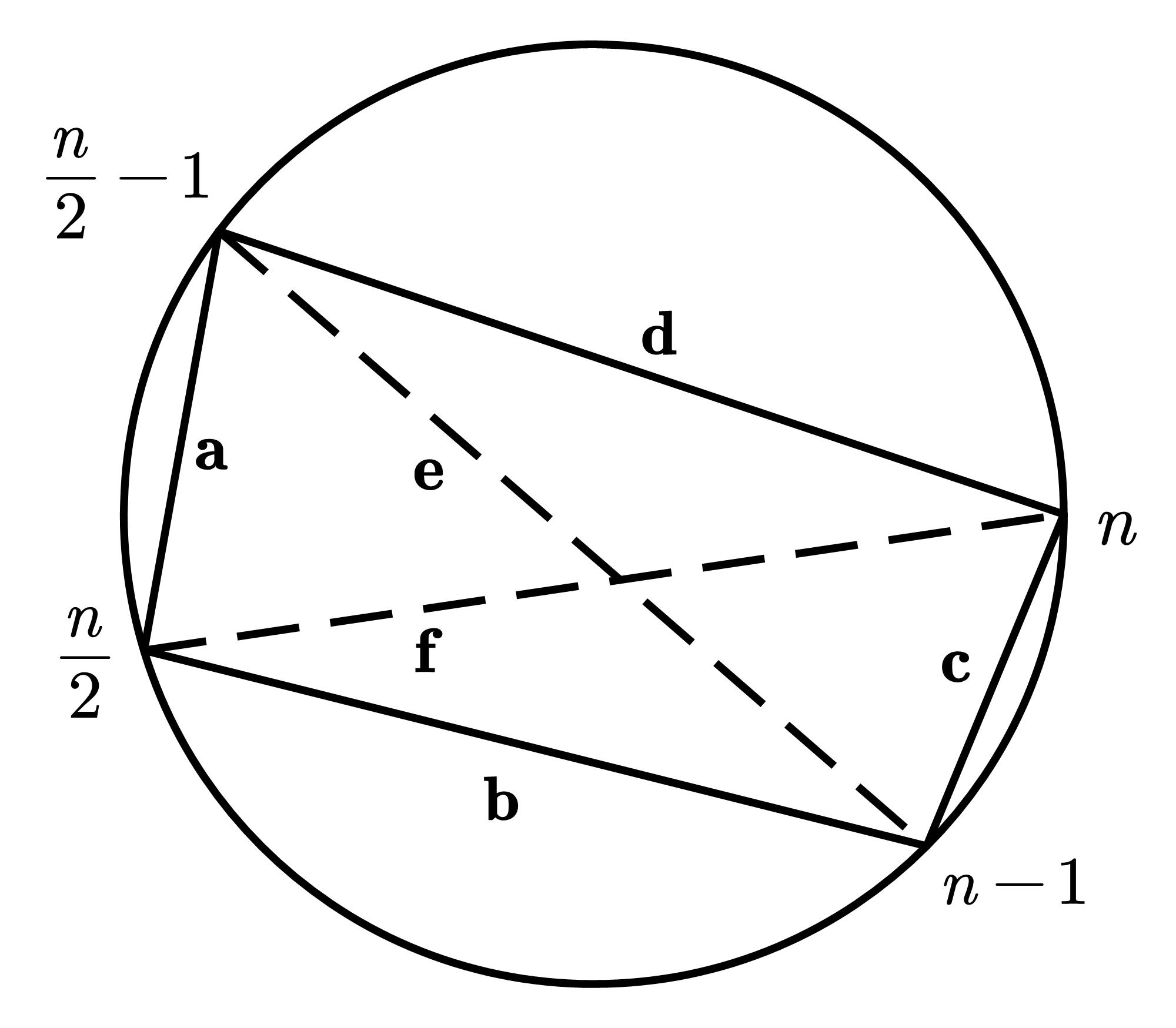}  
	\caption{The dashed lines correspond to the positive terms, while the solid  lines  correspond to the negative terms.
	}
	\label{fig:1ptolemy}
\end{figure}
Set $a=r_{\frac{n}{2}-1,\frac{n}{2}},b=r_{\frac{n}{2},n-1},c=r_{n-1,n},d=r_{n,\frac{n}{2}-1},e=r_{\frac{n}{2}-1,n-1}$
and $f=r_{\frac{n}{2},n}.$ See Figure \ref{fig:1ptolemy} . Then it suffices to show
that
\[
F=\frac{1}{e^{\alpha}}+\frac{1}{f^{\mathbf{\alpha}}}-(\frac{1}{a^{\alpha}}+\frac{1}{b^{\alpha}}+\frac{1}{c^{\alpha}}+\frac{1}{d^{\alpha}})<0.
\]
The inequality is obvious  by Lemma \ref{lemma:keyquadri}.  

When $\alpha\le1$, we would like to provide another interesting  proof.   Ptolemy's theorem says that 
$ef=ac+bd.$ Then 
\begin{align*}
F= & \frac{e^{\alpha}+f^{\alpha}}{(ef)^{\alpha}}-(\frac{a^{\alpha}+c^{\alpha}}{(ac)^{\alpha}}+\frac{b^{\alpha}+d^{\alpha}}{(bd)^{\alpha}})\\
< & \frac{a^{\alpha}+b^{\alpha}+c^{\alpha}+d^{\alpha}}{(ac+bd)^{\alpha}}-(\frac{a^{\alpha}+c^{\alpha}}{(ac)^{\alpha}}+\frac{b^{\alpha}+d^{\alpha}}{(bd)^{\alpha}})\\
< & (a^{\alpha}+c^{\alpha})[\frac{1}{(ac+bd)^{\alpha}}-\frac{1}{(ac)^{\alpha}}]+(b^{\alpha}+d^{\alpha})[\frac{1}{(ac+bd)^{\alpha}}-\frac{1}{(bd)^{\alpha}}]<0.
\end{align*}

In summary, in all cases, we have showed that there is some $g\in D_n$ such that $H_{\mathbf{{\bf m}}}(g\text{\textbf{m}}-\mathbf{m})<0$.   By Lemma \ref{lemma:key1}, there is no centered co-circular central configuration. 
\end{proof}

\subsection{A special case of Theorem \ref{thm:main}}\label{subsec:thm2-1}

The main aim of this subsection is to discuss a special case of Theorem \ref{thm:main}. For it, we first discuss a  geometric inequality of cyclic polygons.

Recall that a cyclic polygon is a polygon with vertices upon which a circle
can be circumscribed. There are many interesting researchs on the
areas of cyclic polygons. What we are interested in is one inequality  involved with the sides of an arbitrary cyclic polygon with $2n$ vertices (cyclic $2n$-gons, for short).
We will also study the inequality corresponding to some sub cyclic polygons.

 Let us fix some notations first.   We will always assume that the vertices of a cyclic $2n$-gon 
are ordered counterclockwise as $1,2,\ldots,2n.$  We refer the polygon
as $G\left\{ 1,2,\ldots,2n\right\}.$ A sub cyclic $2k$-gon consisting of 
 vertices $ i_{1,}i_{2,}\ldots,i_{2k}$, where 
 \[  \left\{ i_{1,}i_{2,}\ldots,i_{2k}\right\} \subset \{ 1,2, \ldots, 2n\}, \  i_1<i_2<\ldots i_{2k},   \] 
is refered  by $G\left\{ i_{1,}i_{2},\ldots,i_{2k}\right\} .$ For a cyclic $2n$-gon $G\left\{ 1,2,\ldots,2n\right\}, $ define 
\[  R\left(G\left\{ 1,2,\ldots,2n\right\} \right)=\sum_{p=s (mod2)}\frac{1}{r_{ps}^{\alpha}}-\sum_{p\ne s (mod2)}\frac{1}{r_{ps}^{\alpha}}. \]
Similarly, 
for a cyclic $2k$-gon $G\left\{ i_{1,}i_{2},\ldots,i_{2k}\right\} ,$ define
\[
R\left(G\left\{ i_{1,}i_{2},\ldots,i_{2k}\right\} \right)=\sum_{p=s (mod2)}\frac{1}{r_{i_{p}i_{s}}^{\alpha}}-\sum_{p\ne s (mod2)}\frac{1}{r_{i_{p}i_{s}}^{\alpha}}.
\]
For a cyclic quadrilateral $G\left\{ i_{1,}i_{2,}i_{3,}i_{4}\right\} ,$
define
\[
S\left(G\left\{ i_{1,}i_{2,}i_{3,}i_{4}\right\} \right)=\frac{1}{r_{i_{1}i_{3}}^{\alpha}}+\frac{1}{r_{i_{2}i_{4}}^{\alpha}}-\left(\frac{1}{r_{i_{2}i_{3}}^{\alpha}}+\frac{1}{r_{i_{1}i_{4}}^{\alpha}}\right).
\]

We are interested in the sign of the above defined functions. For instance, we have seen that $S\left(G\left\{ i_{1,}i_{2,}i_{3,}i_{4}\right\}\right)<0$ for any cyclic quadrilateral $G\left\{ i_{1,}i_{2,}i_{3,}i_{4}\right\}$  in Lemma \ref{lemma:keyquadri}. 
It is also clear that $R\left(G\left\{ i_{1,}i_{2,}i_{3,}i_{4}\right\}\right)<0$ by the proof of Theorem \ref{thm:twounequal}, or just by Lemma \ref{lemma:keyquadri}. It can be extended to cyclic hexagons.

\begin{lem}
\label{lemma:hexagon}For any cyclic hexagon $G\left\{ 1,2,3,4,5,6\right\} $
and any $\alpha>0,$ it holds that 
\begin{align*}
R\left(G\left\{ 1,2,3,4,5,6\right\} \right)=&\frac{1}{r_{13}^{\alpha}}+\frac{1}{r_{15}^{\alpha}}+\frac{1}{r_{24}^{\alpha}}+\frac{1}{r_{26}^{\alpha}}+\frac{1}{r_{35}^{\alpha}}+\frac{1}{r_{46}^{\alpha}}\\
&-(\frac{1}{r_{12}^{\alpha}}+\frac{1}{r_{14}^{\alpha}}+\frac{1}{r_{16}^{\alpha}}+\frac{1}{r_{23}^{\alpha}}+\frac{1}{r_{25}^{\alpha}}+\frac{1}{r_{34}^{\alpha}}+\frac{1}{r_{36}^{\alpha}}+\frac{1}{r_{45}^{\alpha}}+\frac{1}{r_{56}^{\alpha}})<0.
\end{align*}
\end{lem}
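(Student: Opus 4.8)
The plan is to reduce the hexagon inequality to the cyclic‑quadrilateral inequalities already available, namely $S(G\{\cdot\})<0$ from Lemma \ref{lemma:keyquadri} and its immediate consequence $R(G\{\cdot\})<0$ on quadrilaterals. The key is the purely combinatorial identity
\[
R\bigl(G\{1,2,3,4,5,6\}\bigr)=R\bigl(G\{1,2,3,4\}\bigr)+S\bigl(G\{1,2,5,6\}\bigr)+S\bigl(G\{3,4,5,6\}\bigr)-\frac{1}{r_{56}^{\alpha}},
\]
which one verifies just by collecting the $\binom{6}{2}=15$ reciprocal distances: on both sides the six same‑parity distances $r_{13},r_{15},r_{35},r_{24},r_{26},r_{46}$ occur with a $+$ sign, and the nine mixed‑parity distances $r_{12},r_{14},r_{16},r_{23},r_{25},r_{34},r_{36},r_{45},r_{56}$ occur with a $-$ sign, so the two sides agree term by term.

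Once the identity is established, I would argue that every summand on the right is strictly negative. The quadrilaterals $G\{1,2,5,6\}$ and $G\{3,4,5,6\}$ are inscribed in the same circle with vertices in counterclockwise order (since $1<2<5<6$ and $3<4<5<6$), so $S(G\{1,2,5,6\})<0$ and $S(G\{3,4,5,6\})<0$ directly by Lemma \ref{lemma:keyquadri}. For the remaining block, write $R(G\{1,2,3,4\})=S(G\{1,2,3,4\})-\frac{1}{r_{12}^{\alpha}}-\frac{1}{r_{34}^{\alpha}}$, which is negative by Lemma \ref{lemma:keyquadri} again (this is exactly the quadrilateral case of $R<0$ already used in the proof of Theorem \ref{thm:twounequal}). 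Finally $-\,r_{56}^{-\alpha}<0$ trivially. Adding these four negative quantities gives $R(G\{1,2,3,4,5,6\})<0$, as claimed.

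The calculation itself is routine bookkeeping; the only real content is discovering the decomposition — that is, choosing which two ``corner'' quadrilaterals ($\{1,2,5,6\}$ and $\{3,4,5,6\}$) to strip off so that, after accounting for the smaller polygon $G\{1,2,3,4\}$, exactly one surplus reciprocal distance $r_{56}^{-\alpha}$ remains and it carries the correct (negative) sign. I expect the genuinely delicate point to be the analogue of this step for a general cyclic $2n$‑gon: writing $R(G\{1,\ldots,2n\})$ as $R(G\{1,\ldots,2n-2\})$ plus a family of negative $S$‑terms of sub‑quadrilaterals plus a residue of definite sign, which would then drive an induction on $n$. The hexagon is the prototype of that reduction.
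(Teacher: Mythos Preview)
Your proof is correct and follows essentially the same approach as the paper. The paper uses the decomposition
\[
R\bigl(G\{1,2,3,4,5,6\}\bigr)=R\bigl(G\{3,4,5,6\}\bigr)+S\bigl(G\{1,2,3,4\}\bigr)+S\bigl(G\{1,2,5,6\}\bigr)-\frac{1}{r_{12}^{\alpha}},
\]
i.e.\ it peels off the pair $\{1,2\}$ rather than your $\{5,6\}$; the two identities are images of one another under the obvious relabelling, and your anticipated inductive step for the $2n$-gon is exactly what the paper carries out in Lemma~\ref{lem:decomposition}.
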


\begin{proof}
The idea is to decompose $R\left(G\left\{ 1,2,3,4,5,6\right\} \right)$ as indicated in Figure \ref{fig:3hexagon}. 

\begin{figure}[h!]
	\centering
	\includegraphics[scale=0.6]{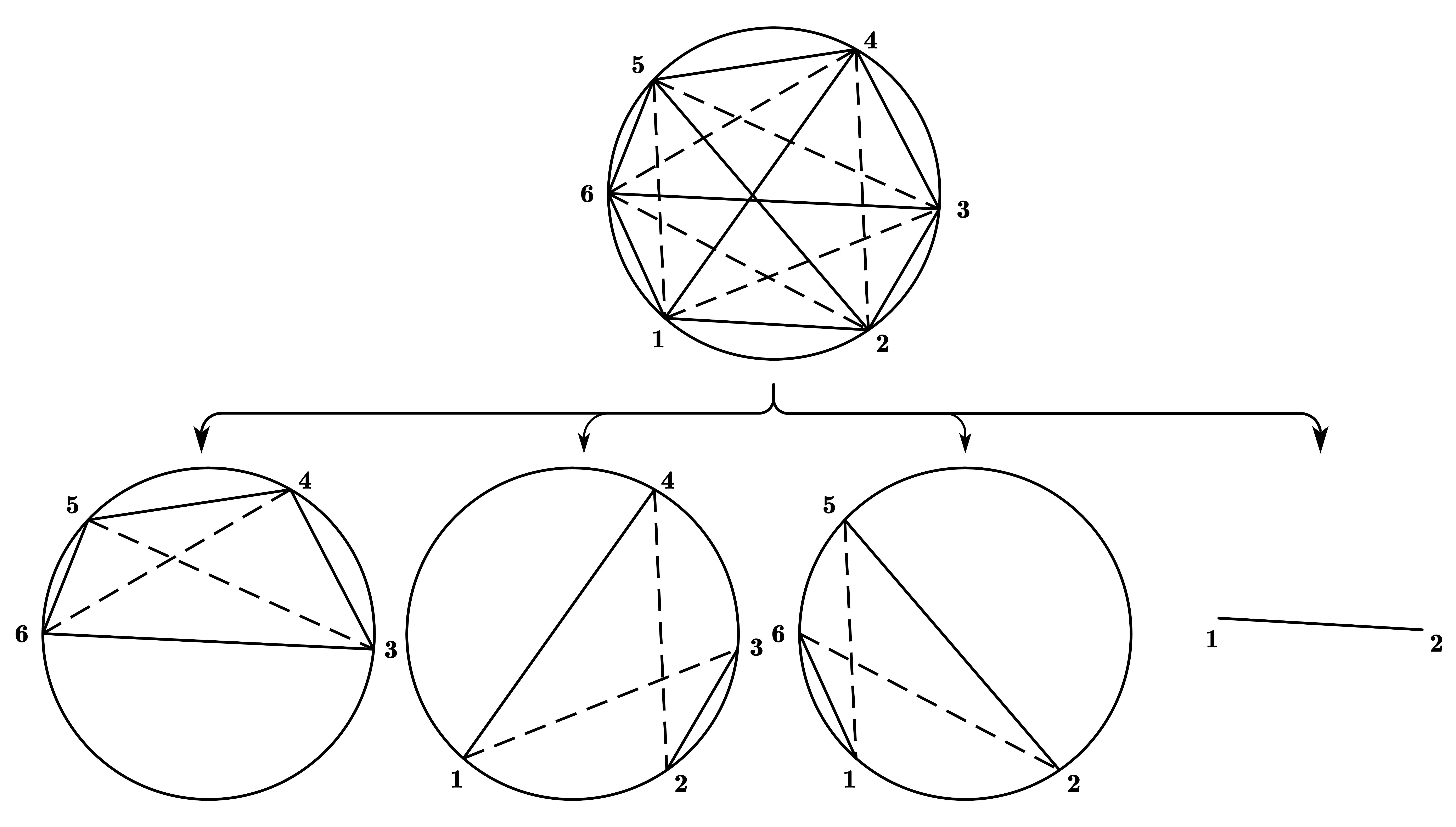}  
	\caption{The dashed lines correspond to the positive terms, while the solid  lines  correspond to the negative terms.
	}
	\label{fig:3hexagon}
\end{figure}

Note that 
\[
R\left(G\left\{ 1,2,3,4,5,6\right\} \right)=R\left(G\left\{ 3,4,5,6\right\} \right)+S\left(G\left\{ 1,2,3,4\right\} \right)+S\left(G\left\{ 1,2,5,6\right\} \right)-\frac{1}{r_{12}^{\alpha}}.
\]
Then by Lemma \ref{lemma:keyquadri}, the inequality $R\left(G\left\{ 1,2,3,4,5,6\right\} \right)<0$ holds for any cyclic hexagon.  
\end{proof}
\begin{thm}
\label{thm:three} In the general power-law potential n-body problem, assume that the masses can be divided  into two groups of equal masses, and the cardinality of the second group is 3.   There is  no centered co-circular  central configuration  unless all the masses are equal. 
\end{thm}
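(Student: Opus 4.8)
The plan is to invoke the criterion of Lemma~\ref{lemma:key1} with the single cyclic shift $g=P\in D_n$. Normalize so that $n-3$ of the masses equal $1$ and the remaining three equal a common value $m$; if $m=1$ all masses are equal, so assume $m\ne 1$. Suppose the three $m$-masses sit at positions $i_1<i_2<i_3$ on the circle. From the definition of $P$ one computes $P\mathbf{m}-\mathbf{m}=(m-1)\,w$, where $w_j=1$ when the mass at position $j{+}1$ is one of the three $m$'s and the mass at position $j$ is not, $w_j=-1$ in the reverse situation, and $w_j=0$ otherwise (all indices mod $n$). Since the diagonal of $H_{\mathbf{m}}$ vanishes,
\[
H_{\mathbf{m}}(P\mathbf{m}-\mathbf{m})=(m-1)^2\,w^{T}H_{\mathbf{m}}w=2(m-1)^2\sum_{p<q}\frac{w_pw_q}{r_{pq}^{\alpha}},
\]
so, as $(m-1)^2>0$, it suffices to show this last sum is negative.

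The structural point is that the nonzero entries of $w$ are precisely the ``boundary indices'' of the maximal circular arcs of consecutive $m$-masses: if those three masses form $k$ such arcs ($k\in\{1,2,3\}$), then $w$ has exactly $k$ entries $+1$ and $k$ entries $-1$, at $2k$ pairwise distinct indices (maximality forbids two boundary indices from coinciding), and reading counterclockwise these nonzero entries occur in the strictly alternating cyclic pattern $+1,-1,+1,-1,\dots$. I would then treat the three cases. For $k=3$ the six indices carrying nonzero $w$ form a cyclic hexagon whose vertices carry alternating $\pm1$ labels, so $\sum_{p<q}w_pw_q/r_{pq}^{\alpha}$ equals $R(G\{\cdot\})$ for that hexagon, which is negative by Lemma~\ref{lemma:hexagon}. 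For $k=2$ the four such indices form a cyclic quadrilateral with alternating labels and the sum equals $R(G\{\cdot\})$ for that quadrilateral; since $R$ of an alternating cyclic quadrilateral equals $S(G\{\cdot\})$ minus two further positive terms, it is negative by Lemma~\ref{lemma:keyquadri} (equivalently, by the computation in the proof of Theorem~\ref{thm:twounequal}). For $k=1$ (the three $m$'s consecutive, $i_2=i_1+1$, $i_3=i_1+2$) the vector $w$ has a single $+1$ at $i_1-1$ and a single $-1$ at $i_3$, so the sum is $-1/r_{i_1-1,\,i_3}^{\alpha}<0$. In every case $H_{\mathbf{m}}(P\mathbf{m}-\mathbf{m})<0$, so Lemma~\ref{lemma:key1} gives $(\mathbf{m},\theta_{\mathbf{m}})\notin\mathcal{CC}_0$; hence no such mass distribution admits a centered co-circular central configuration, unless $m=1$, i.e.\ unless all masses are equal.

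The analytic content is entirely carried by Lemmas~\ref{lemma:keyquadri} and~\ref{lemma:hexagon}, so the main obstacle is combinatorial bookkeeping: one must verify the alternation/distinctness claim in full generality, including when an arc of $m$-masses straddles the ``seam'' at index $n$, and in the smallest admissible configurations (for instance $n=4$, which is forced into the case $k=1$, and $n=5$, which realizes $k=2$). Concretely one checks that exactly $2k$ boundary indices survive the cancellations in $P\mathbf{m}-\mathbf{m}$, that they are pairwise distinct, and that their $\pm1$ labels alternate when read counterclockwise. I would phrase the whole argument purely at the level of the circular block structure of the set $\{i_1,i_2,i_3\}$, so that the seam and the normalization $\theta_n=2\pi$ need no separate treatment.
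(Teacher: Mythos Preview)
Your proposal is correct and follows essentially the same route as the paper: both use the single shift $g=P$, compute $P\mathbf{m}-\mathbf{m}=(m-1)w$, and split into the three cases according to whether the three $m$-masses form $k=3,2,1$ maximal circular blocks, invoking Lemma~\ref{lemma:hexagon}, the quadrilateral inequality (Lemma~\ref{lemma:keyquadri}), and the trivial two-index case respectively. Your ``boundary indices of maximal arcs'' language is a slightly cleaner unified description of what the paper writes out case by case, but the content is identical.
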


\begin{proof}
	Without lose of generality, assume that each mass of the first group is 1 and  each mass of the second group is $m$.
	
Firstly, we consider the case when  three $m$'s are nonadjacent.  Without losing
generality, assume the mass vector is $$\mathbf{m=}(1,\ldots,1,m,1,\ldots,1,m,1\ldots,1,m),$$
where  the three $m$'s  locate at the $i,j$ and $n$-th positions.
Since the three positions are nonadjacent, we have $1<i,i+1<j, j+1<n.$ Let $g=P.$
Then 
\[
g\mathbf{m}-\mathbf{m}=(0\ldots,0,m-1,1-m,0\ldots,0,m-1,1-m,0\ldots,0,m-1,1-m).
\]
The  $(m-1)$-terms  are at the $(i-1)$-th, the
$(j-1)$-th and the $(n-1)$-th coordinates, while the
 $(1-m)$-terms are 
at the $i$-th,  the $j$-th and  the $n$-th coordinates. Then, 
\[
H_{\mathbf{m}}(g\mathbf{m}-\mathbf{m})=2(m-1)^{2}R\left(G\left\{ i-1,i,j-1,j,n-1,n\right\} \right),
\]
where $G\left\{ i-1,i,j-1,j,n-1,n\right\} $ is the cyclic hexagon
formed by the six co-circular vertices $ i-1,i,j-1,j,n-1,n$, and  the co-circular $n$ vertices $1,2, \ldots, n$ are obtained from $\theta_{{\bf m}}$  of Lemma \ref{lem:Cors}.     By Lemma \ref{lemma:hexagon},
we have$H_{\mathbf{m}}(g\mathbf{m}-\mathbf{m})<0.$ 

Secondly, assume that two of three $m$'s are adjacent. Without losing
generality, assume the mass vector is 
$$\mathbf{m=}(1,\ldots,1,m,1,\ldots,1,m,m),$$
and the $m$'s  locate  at the $i,n-1$ and $n$-th positions.
So $1<i,i+1<n-1.$ Let $g=P.$ Then 
\[
g\mathbf{m}-\mathbf{m}=(0\ldots,0,m-1,1-m,0\ldots,0,m-1,0,1-m).
\]
Then, 
\[
H_{\mathbf{m}}(g\mathbf{m}-\mathbf{m})=2(m-1)^{2}R\left(G\left\{ i-1,i,n-2,n\right\} \right),
\]
where $G\left\{ i-1,i,n-2,n\right\} $  is  the cyclic quadrilateral 
formed by the four co-circular vertices $ i-1, i,n-2,n$, and the co-circular $n$ vertices $1,2, \ldots, n$ are obtained from $\theta_{{\bf m}}$  of Lemma \ref{lem:Cors}.  
By Lemma \ref{lemma:keyquadri}, we have $H_{\mathbf{m}}(g\mathbf{m}-\mathbf{m})<0.$

Thirdly, assume that the three $m$'s are adjacent. Without losing
generality, assume the mass vector is 
$$\mathbf{m=}(1,\ldots,1,m,m,m).$$
Let $g=P.$ Then 
\[
g\mathbf{m}-\mathbf{m}=(0\ldots,0,m-1,0,0,1-m).
\]
Then obviously, $H_{\mathbf{m}}(g\mathbf{m}-\mathbf{m})<0.$

In summary, in all cases, we have showed that there is some $g\in D_n$ such that $H_{\mathbf{{\bf m}}}(g\text{\textbf{m}}-\mathbf{m})<0$.   By Lemma \ref{lemma:key1}, there is no centered co-circular central configuration. 
\end{proof}

\subsection{The general case of Theorem \ref{thm:main}}

We first generalize Lemma \ref{lemma:hexagon} to all cyclic $2n$-gons,
via induction. 
\begin{lem}
\label{lem:decomposition}For any cylic $2n$-gon $G\left\{ 1,2,\ldots,2n\right\} $, it always holds that 
\[  R\left(G\left\{ 1,2,\ldots,2n\right\} \right)=\sum_{p=s (mod2)}\frac{1}{r_{ps}^{\alpha}}-\sum_{p\ne s (mod2)}\frac{1}{r_{ps}^{\alpha}}<0. \]
\end{lem}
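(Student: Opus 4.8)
The plan is to prove the lemma by induction on $n$, generalizing the decomposition that appeared in the proof of Lemma~\ref{lemma:hexagon}. The base case $n=2$ is a cyclic quadrilateral, where $R\bigl(G\{1,2,3,4\}\bigr)=S\bigl(G\{1,2,3,4\}\bigr)-\tfrac{1}{r_{12}^{\alpha}}-\tfrac{1}{r_{34}^{\alpha}}$, which is negative by Lemma~\ref{lemma:keyquadri} (the case $n=3$ is Lemma~\ref{lemma:hexagon} and could equally serve as the base). For the inductive step I would peel off the two adjacent vertices $1$ and $2$ and establish the identity
\[
R\bigl(G\{1,2,\ldots,2n\}\bigr)=R\bigl(G\{3,4,\ldots,2n\}\bigr)+\sum_{k=2}^{n}S\bigl(G\{1,2,2k-1,2k\}\bigr)-\frac{1}{r_{12}^{\alpha}} .
\]

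To justify this identity I would split the $\binom{2n}{2}$ summands of $R\bigl(G\{1,2,\ldots,2n\}\bigr)$ into three disjoint groups. The pairs with both indices in $\{3,\ldots,2n\}$ reassemble exactly into $R\bigl(G\{3,4,\ldots,2n\}\bigr)$, because the $m$-th vertex of that sub cyclic $2(n-1)$-gon is the vertex $m+2$ of the big polygon, so the parity used to assign signs is unchanged. The pairs containing vertex $1$ (an odd position) contribute $-\tfrac{1}{r_{12}^{\alpha}}$ together with $+\tfrac{1}{r_{1,2k-1}^{\alpha}}$ and $-\tfrac{1}{r_{1,2k}^{\alpha}}$ for $k=2,\dots,n$; the pairs containing vertex $2$ but not vertex $1$ (an even position) contribute $-\tfrac{1}{r_{2,2k-1}^{\alpha}}$ and $+\tfrac{1}{r_{2,2k}^{\alpha}}$ for $k=2,\dots,n$. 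Collecting the four terms indexed by each $k$ gives precisely $S\bigl(G\{1,2,2k-1,2k\}\bigr)$, the leftover $-\tfrac{1}{r_{12}^{\alpha}}$ being the last term; a term count, $\binom{2n}{2}=(2n-1)+(2n-2)+\binom{2n-2}{2}$, confirms that nothing is missing or double-counted.

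Once the identity is in place the induction closes at once: $R\bigl(G\{3,4,\ldots,2n\}\bigr)<0$ by the inductive hypothesis (the vertices $3<4<\cdots<2n$ form a cyclic $2(n-1)$-gon and $R$ depends only on the cyclic arrangement), each $S\bigl(G\{1,2,2k-1,2k\}\bigr)<0$ by Lemma~\ref{lemma:keyquadri}, and $-\tfrac{1}{r_{12}^{\alpha}}<0$; hence $R\bigl(G\{1,2,\ldots,2n\}\bigr)<0$. I do not expect a genuine obstacle: the only delicate point is the bookkeeping in the displayed identity — one must check that each of the $\binom{2n}{2}$ distances occurs on the right-hand side with the sign dictated by the parity of its positions, and that the restriction of $R$ to $G\{3,4,\ldots,2n\}$ is literally the object to which the inductive hypothesis applies. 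This is routine, exactly as in the hexagon computation, but must be verified term by term.
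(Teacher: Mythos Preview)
Your proposal is correct and follows exactly the same approach as the paper: induction on $n$ with the decomposition
\[
R\bigl(G\{1,2,\ldots,2n\}\bigr)=R\bigl(G\{3,4,\ldots,2n\}\bigr)+\sum_{k=2}^{n}S\bigl(G\{1,2,2k-1,2k\}\bigr)-\frac{1}{r_{12}^{\alpha}},
\]
concluding via Lemma~\ref{lemma:keyquadri} and the inductive hypothesis. Your write-up is in fact more explicit about the term-by-term bookkeeping than the paper, which relies on a figure to convey the identity.
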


\begin{proof}
We proceed by induction on the number $n$. First, note that it is true for $n=2$ by Lemma \ref{lemma:keyquadri}, for $n=3$ by Lemma \ref{lem:decomposition}.
Now assume that it holds for number less than $n$. For the case of  $n$, 
decompose $R\left(G\left\{ 1,2,\ldots, 2n\right\} \right)$ as indicated in Figure \ref{fig:42n-gon}. 

\begin{figure}[h!]
	\centering
	\includegraphics[scale=0.6]{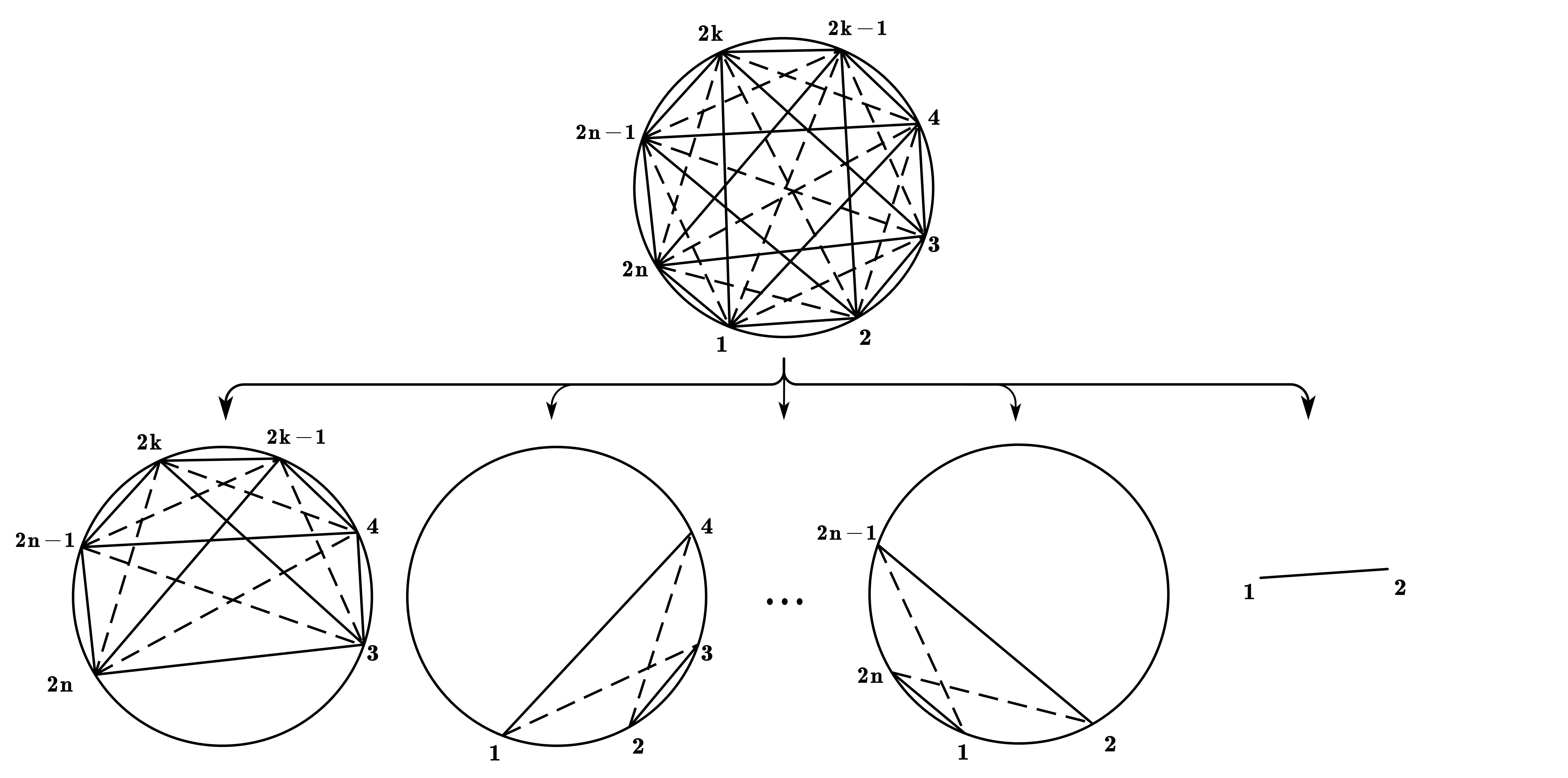}  
	\caption{The dashed lines correspond to the positive terms, while the solid  lines  correspond to the negative terms.
	}
	\label{fig:42n-gon}
\end{figure}

Note that 
\begin{align*}
R\left(G\left\{ 1,2,\ldots,2n\right\} \right)=&R\left(G\left\{ 3,4,\ldots,2n\right\} \right)+S\left(G\left\{ 1,2,3,4\right\} \right)\\
&+S\left(G\left\{ 1,2,5,6\right\} \right)+\ldots+S\left(G\left\{ 1,2,2n-1,2n\right\} \right)-\frac{1}{r_{12}^{\alpha}}.
\end{align*}
Then by Lemma \ref{lemma:keyquadri} and the hypotheses,   the inequality $R\left(G\left\{ 1,2,\ldots, 2n\right\} \right)<0$ holds for any cyclic 2n-gon.   

In summary, the inequality $R\left(G\left\{ 1,2,\ldots, 2n\right\} \right)<0$ holds for any cyclic 2n-gon.   
\end{proof}
\begin{proof}
{[proof of Theorem \ref{thm:main}]} 

	Without lose of generality, assume that each mass of the first group is 1 and  each mass of the second group is $m$. Suppose that the cardinality  is $n$ for  the first group and is $k$ for the second, and  $k\le n.$
	
Firstly, we consider the case when  $k$ $m$'s are nonadjacent.  Without losing
generality, assume that  the $k$$m$'s  locate  the $i_{1},\ldots,i_{k}$-th
positions and $i_k=n$. Then $1<i_{1}, i_{s}+1<i_{s+1}$ for $1\le s\le k-1$ and $i_k=n$. 
Similar to the proof of Theorem \ref{thm:three}, for $g=P$, the
vector $g\mathbf{m}-\mathbf{m}$ has $n-k$ zeros. Neglecting those
zeros, and dividing it by $m-1,$ the vector $g\mathbf{m}-\mathbf{m}$
consists of $k$ $1$'s, and $k$ $-1$'s.  The $1$'s and $-1$'s appear consectively. Then 
\[
H_{\mathbf{m}}(g\mathbf{m}-\mathbf{m})=2(m-1)^{2}R\left(G\left\{ i_{1}-1,i_{1},i_{2}-1,i_{2},\ldots,i_{k}-1,i_{k}\right\} \right),
\]
where $G\left\{ i_{1}-1,i_{1},i_{2}-1,i_{2},\ldots,i_{k}-1,i_{k}\right\}$
is the  cyclic $2k$-gon formed by the $2k$ co-circular vertices $i_{1}-1,i_{1},i_{2}-1,i_{2},\ldots,i_{k}-1,i_{k}$, 
and  the co-circular $n+k$ vertices $1,2, \ldots, n+k$ are obtained from $\theta_{{\bf m}}$  of Lemma \ref{lem:Cors}.     By Lemma \ref{lem:decomposition}, 
we have$H_{\mathbf{m}}(g\mathbf{m}-\mathbf{m})<0.$

Secondly, assume that some of $k$ $m$'s are adjacent.  Similar to the
proof of Theorem \ref{thm:three}, it would always hold that $H_{\mathbf{m}}(Pm-\mathbf{m})<0$ and we would like not to repeat the proof.

In summary, in all cases, we have showed that there is some $g\in D_n$ such that $H_{\mathbf{{\bf m}}}(g\text{\textbf{m}}-\mathbf{m})<0$.   By Lemma \ref{lemma:key1}, there is no centered co-circular central configuration. 
\end{proof}

\section*{Acknowledgment}
The first author  would like to thank the Innovation Training Program for College Students of Southwestern University of Finance and Economics for its support in all aspects of the project. He was able to complete the  work smoothly because of the school's platform and organization.

\end{document}